\DeclareMathAlphabet{\mathpzc}{OT1}{pzc}{m}{it}
\newtheorem{theorem}{Theorem}[section]
\newtheorem{corollary}[theorem]{Corollary}
\theoremstyle{definition}
\newtheorem{definition}[theorem]{Definition}
\newtheorem{notation}[theorem]{Notation}
\theoremstyle{remark}
\newtheorem{remark}[theorem]{Remark}
\numberwithin{equation}{section}
\begin{document}

\title{Uniform stable radius, L\^e numbers and topological triviality for line singularities}

\author{Christophe Eyral}

\address{C. Eyral, Institute of Mathematics, Polish Academy of Sciences, 
\'Sniadeckich~8, 00-656 Warsaw, Poland}  
\email{eyralchr@yahoo.com} 

\subjclass[2010]{14B05, 14B07, 14J70, 14J17, 32S25, 32S05}
\keywords{Line singularities, uniform stable radius, L\^e numbers, equisingularity.}

\begin{abstract}
Let $\{f_t\}$ be a family of complex polynomial functions with \emph{line} singularities. We show that if $\{f_t\}$ has a \emph{uniform stable radius} (for the corresponding Milnor fibrations), then the L\^e numbers of the functions $f_t$ are independent of $t$ for all small $t$. In the case of isolated singularities --- a case for which the only non-zero L\^e number coincides with the Milnor number --- a similar assertion was proved by M. Oka and D. O'Shea.

By combining our result with a theorem of J.~Fern\'andez de Bobadilla --- which says that families of line singularities in $\mathbb{C}^n$, $n\geq 5$, with constant L\^e numbers are topologically trivial --- it follows that a family of line singularities in $\mathbb{C}^n$, $n\geq 5$, is topologically trivial if it has a uniform stable radius. 

As an important example, we show that families of weighted homogeneous line singularities have a uniform stable radius if the nearby fibres $f_t^{-1}(\eta)$, $\eta\not=0$, are ``uniformly'' non-singular with respect to the deformation parameter $t$.
\end{abstract}

\maketitle

\markboth{C. Eyral}{Uniform stable radius, L\^e numbers and topological triviality for line singularities}  

\section{Introduction}

Let $(t,\mathbf{z}):=(t,z_1,\ldots,z_n)$ be linear coordinates for $\mathbb{C}\times \mathbb{C}^n$ ($n\geq 2$), and let 
\begin{equation}\label{lfi}
f\colon (\mathbb{C} \times \mathbb{C}^n,\mathbb{C} \times \{\mathbf{0}\}) \rightarrow (\mathbb{C},0),\
(t,\mathbf{z})\mapsto f(t,\mathbf{z}), 
\end{equation}
be a polynomial function.
As usual, we write $f_t(\mathbf{z}):= f(t,\mathbf{z})$, and for any $\eta\in\mathbb{C}$, we denote by $V(f_t-\eta)$ the hypersurface in $\mathbb{C}^n$ defined by the equation $f_t(\mathbf{z})=\eta$. (Note that (\ref{lfi}) implies $f_t(\mathbf{0}) = f(t,\mathbf{0})=0$, so that the origin $\mathbf{0}\in \mathbb{C}^n$ belongs to the hypersurface $V(f_t)=f_t^{-1}(0)$ for all $t\in\mathbb{C}$.) 

The purpose of this paper is to show that if the polynomial function $f$ defines a family $\{f_t\}$ of hypersurfaces with \emph{line} singularities and with a \emph{uniform stable radius} (for the corresponding Milnor fibrations),
then the L\^e numbers 
\begin{equation*}
\lambda^{0}_{f_t,\mathbf{z}}(\mathbf{0}),\ldots, \lambda^{n-1}_{f_t,\mathbf{z}}(\mathbf{0})
\end{equation*}
of the polynomial functions $f_t$ at $\mathbf{0}$ with respect to the coordinates $\mathbf{z}$ --- which do exist in this case --- are independent of $t$ for all small $t$ (cf.~Theorem \ref{mt1}). In the case of hypersurfaces with \emph{isolated} singularities --- a case for which the constancy of the L\^e numbers means the constancy of the Milnor number --- a similar assertion was proved by M.~Oka \cite{O2} and D. O'Shea \cite{OS}. 

It is worth to observe that by combining Theorem \ref{mt1} with a theorem of J. Fern\'andez de Bobadilla \cite{F} --- which says that a family of hypersurfaces with line singularities in $\mathbb{C}^n$, $n\geq 5$, is topologically trivial if it has constant L\^e numbers --- it follows that a family of hypersurfaces with line singularities in $\mathbb{C}^n$, $n\geq 5$, is topologically trivial if it has a uniform stable radius (cf.~Corollary \ref{mt2}).

It is well known that if $\{f_t\}$ is a family of \emph{isolated} hypersurface singularities such that each $f_t$ is \emph{weighted homogeneous} with respect to a given system of weights, then $\{f_t\}$ has a uniform stable radius --- a result of M. Oka \cite{O2} and D.~O'Shea \cite{OS}.
In Theorem \ref{mt3}, we show this still holds true for weighted homogeneous hypersurfaces with \emph{line} singularities provided that the nearby fibres $V(f_t-\eta)$, $\eta\not=0$, are ``uniformly'' non-singular with respect to the deformation parameter $t$ --- that is, non-singular in a small ball the radius of which does not depends on $t$. (Note that this condition always holds true for isolated singularities.) In particular, by Corollary \ref{mt2}, such families have constant L\^e numbers, and for $n\geq 5$, they are topologically trivial.

Finally, let us observe that by combining Corollary \ref{mt2} with a theorem of M. Oka \cite{O} --- which says that a family $\{f_t\}$ of non-degenerate functions with constant Newton boundary has a uniform stable radius --- we get a new proof of a theorem of J.~Damon \cite{D} which says that if $\{f_t\}$ is a family of non-degenerate line singularities in $\mathbb{C}^n$, $n\geq 5$, with constant Newton boundary, then $\{f_t\}$ is topologically trivial.

\begin{notation}
In this paper, we are only interested in the behaviour of functions (or hypersurfaces) near the origin $\mathbf{0}\in\mathbb{C}^n$.
We denote by $B_\varepsilon$ the closed ball centred at $\mathbf{0}\in\mathbb{C}^n$ with radius $\varepsilon>0$, and we write $\mathring{B}_\varepsilon$ (respectively, $S_\varepsilon$) for its interior (respectively, its boundary). As usual, in $\mathbb{C}$, we rather write $D_\varepsilon$ and $\mathring{D}_\varepsilon$  instead of $B_\varepsilon$ and~$\mathring{B}_\varepsilon$. 
\end{notation}

\section{Uniform stable radius}\label{Sect-USR}

By \cite[Lemme (2.1.4)]{HL}, we know that for each $t$ there exists a positive number $r_t>0$ such that for any pair $(\varepsilon_t,\varepsilon'_t)$ with $0<\varepsilon'_t\leq \varepsilon_t\leq r_t$, there exists $\delta(\varepsilon_t,\varepsilon'_t)>0$ such that for any non-zero complex number $\eta$ with $0<\vert\eta\vert\leq\delta(\varepsilon_t,\varepsilon'_t)$, the hypersurface $V(f_t-\eta)$ is non-singular in $\mathring{B}_{r_t}$ and transversely intersects with the sphere $S_{\varepsilon''}$ for any $ \varepsilon''$ with $\varepsilon'_t\leq \varepsilon''\leq \varepsilon_t$. Any such a number $r_t$ is called a \emph{stable radius} for the Milnor fibration of $f_t$ at $\mathbf{0}$ (cf.~\cite[\S 2]{O}).

\begin{definition}[\mbox{cf.~\cite[\S 3]{O}}]\label{def-WUSR}
We say that the family $\{f_t\}$ has a \emph{uniform stable radius} (we also say that $\{f_t\}$ is \emph{uniformly stable}) if there exist $\tau > 0$ and $r > 0$ such that for any pair $(\varepsilon,\varepsilon')$ with $0<\varepsilon'\leq \varepsilon\leq r$, there exists $\delta(\varepsilon,\varepsilon')>0$ such that for any non-zero complex number $\eta$ with $0<\vert\eta\vert\leq\delta(\varepsilon,\varepsilon')$, the hypersurface $V(f_t-\eta)$ is non-singular in $\mathring{B}_{r}$ and transversely intersects with the sphere $S_{\varepsilon''}$ for any $ \varepsilon''$ with $\varepsilon'\leq \varepsilon''\leq \varepsilon$ and for any $t$ with $0\leq \vert t\vert \leq\tau$.
Any such a number $r$ is called a \emph{uniform stable radius} for $\{f_t\}$.
\end{definition}

In the special case where the polynomial function $f$ defines a family $\{f_t\}$ of \emph{isolated} hypersurface singularities (i.e., $f_t$ has an isolated singularity at $\mathbf{0}$ for all small $t$), then, by \cite{Milnor2}, we also know that for each $t$ there exists $R_t>0$ such that the hypersurface $V(f_t)$ is non-singular in $\mathring{B}_{R_t}\setminus\{\mathbf{0}\}$ and transversely intersects with the sphere $S_\rho$ for any $\rho$ with $0<\rho\leq R_t$. 

\begin{definition}[\mbox{cf.~\cite[\S 2]{O2}}]\label{def-condA}
Suppose that $f$ defines a family $\{f_t\}$ of \emph{isolated} hypersurface singularities . We say that $\{f_t\}$ satisfies \emph{condition (A)} if there exist $\nu>0$ and $R>0$ such that $V(f_t)$ is non-singular in $\mathring{B}_{R}\setminus\{\mathbf{0}\}$ and transversely intersects with the sphere $S_\rho$ for any $\rho$ with $0<\rho\leq R$ and for any $t$ with $0\leq \vert t\vert \leq\nu$.
\end{definition}

It is easy to see that a family $\{f_t\}$ of isolated hypersurface singularities satisfies condition~\emph{(A)} if and only if it has no \emph{vanishing fold} and no \emph{non-trivial critical arc} in the sense of \cite{OS}. Also, it is worth to observe that if $\{f_t\}$ satisfies condition~\emph{(A)}, then it has a uniform stable radius (cf.~\cite{O2,OS}).

\section{The Oka-O'Shea theorem for isolated singularities}\label{sect-OkaOshea}
Throughout this section we assume that the polynomial function $f$ defines a family $\{f_t\}$ of \emph{isolated} hypersurface singularities.
The following theorem is due to M. Oka \cite{O2} and D. O'Shea \cite{OS}.

\begin{theorem}[Oka-O'Shea]\label{thm-osokaumr}
Suppose that $f$ defines a family $\{f_t\}$ of isolated hypersurface singularities.
Under this assumption, if furthermore $\{f_t\}$ satisfies condition (A) or if it has a uniform stable radius, then it is $\mu$-constant --- that is, the Milnor number $\mu_{f_t}(\mathbf{0})$ of $f_t$ at $\mathbf{0}$ is independent of $t$ for all small $t$.
\end{theorem}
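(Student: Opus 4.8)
The plan is to prove the assertion under the hypothesis that $\{f_t\}$ has a uniform stable radius; the case of condition (A) then follows, since condition (A) implies a uniform stable radius, and at the single point where this is not quite enough (explained below) condition (A) supplies what is missing directly. The strategy is to assemble the Milnor fibres of the $f_t$ into one locally trivial fibration over the parameter disc, conclude that they all have the same Euler characteristic, and convert this into the constancy of $\mu_{f_t}(\mathbf{0})$ via the classical fact that the Milnor fibre $F_t$ of the isolated singularity $f_t$ at $\mathbf{0}$ is homotopy equivalent to a bouquet of $\mu_{f_t}(\mathbf{0})$ spheres of real dimension $n-1$; thus $\mu_{f_t}(\mathbf{0})=(-1)^{n-1}\bigl(\chi(F_t)-1\bigr)$.

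Fix a uniform stable radius $r$ and a corresponding $\tau>0$ as in Definition \ref{def-WUSR}. After shrinking $r$, then $\tau$, we may assume that $B_r$ is a Milnor ball for $f_0$ at $\mathbf{0}$; that the critical set of $f_0$ meets $B_r$ only at $\mathbf{0}$ (the critical set of $f_0$ is analytic and $\mathbf{0}$ is isolated in it, since any positive-dimensional component through $\mathbf{0}$ would lie in $V(f_0)$, $f_0$ being constant on it, against the isolatedness of the singularity); and that $|f(t,\mathbf{z})|<\delta(r,r)$ on $D_\tau\times B_r$, where $\delta(r,r)>0$ is the number attached by Definition \ref{def-WUSR} to the pair $(\varepsilon,\varepsilon')=(r,r)$. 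Choose $\eta_0$ with $0<|\eta_0|\le\delta(r,r)$, small enough that $V(f_0-\eta_0)\cap B_r$ equals the Milnor fibre $F_0$, and set
\[
Y:=\{(t,\mathbf{z})\in D_\tau\times B_r : f_t(\mathbf{z})=\eta_0\},\qquad \pi\colon Y\to D_\tau,\ (t,\mathbf{z})\mapsto t.
\]
The uniform stable radius gives, for all $|t|\le\tau$: the non-singularity of $V(f_t-\eta_0)$ in $\mathring{B}_r$ makes $Y$ smooth over $D_\tau\times\mathring{B}_r$ with $\pi$ a submersion there, while the transversality of $V(f_t-\eta_0)$ with $S_r$ (the $(r,r)$ case of the definition) makes each fibre $V(f_t-\eta_0)\cap B_r$ a compact manifold with boundary, meeting $S_r$ cleanly and uniformly in $t$. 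Since $Y$ is closed in the compact set $D_\tau\times B_r$, the map $\pi$ is a proper submersion of manifolds with boundary, hence a locally trivial fibration by the Ehresmann fibration theorem with boundary; as $D_\tau$ is connected, all fibres $V(f_t-\eta_0)\cap B_r$ are diffeomorphic. For $t=0$ this fibre is $F_0$, so $\chi\bigl(V(f_t-\eta_0)\cap B_r\bigr)=\chi(F_0)=1+(-1)^{n-1}\mu_{f_0}(\mathbf{0})$ for all $|t|\le\tau$.

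It remains to identify $V(f_t-\eta_0)\cap B_r$ with the Milnor fibre of $f_t$ at $\mathbf{0}$, i.e.\ to show that for small $t$ the point $\mathbf{0}$ is the only critical point of $f_t$ in $\mathring{B}_r$ --- that $\{f_t\}$ has no non-trivial critical arc in the sense of \cite{OS}. Let $\mathbf{p}(t)\neq\mathbf{0}$ be a critical point of $f_t$ in $\mathring{B}_r$. Since the critical set of $f_0$ meets $B_r$ only at $\mathbf{0}$, continuity of the critical set of $f_t$ forces $\mathbf{p}(t)\to\mathbf{0}$, hence $c(t):=f_t(\mathbf{p}(t))\to 0$. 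If $c(t)\neq 0$ for $t\neq 0$ small, then $0<|c(t)|\le\delta(r,r)$ while $V(f_t-c(t))$ is singular at $\mathbf{p}(t)\in\mathring{B}_r$, contradicting Definition \ref{def-WUSR}. If $c(t)\equiv 0$, then $\mathbf{p}(t)$ is a singular point of $V(f_t)$ lying on the sphere $S_{|\mathbf{p}(t)|}$; the vanishing cycle(s) carried by $f_t$ at $\mathbf{p}(t)$ force $V(f_t-\eta)$, for small $\eta\neq 0$, to fail transversality with some sphere $S_\rho$ with $\rho$ arbitrarily close to $|\mathbf{p}(t)|$ (the local Milnor fibre at $\mathbf{p}(t)$ straddles $S_{|\mathbf{p}(t)|}$), and choosing the pair $(\varepsilon,\varepsilon')$ in Definition \ref{def-WUSR} so that $[\varepsilon',\varepsilon]$ contains these radii contradicts the uniform stable radius. (If $\{f_t\}$ satisfies condition (A), the case $c(t)\equiv 0$ is ruled out at once by the clause "$V(f_t)$ non-singular in $\mathring{B}_R\setminus\{\mathbf{0}\}$ for $|t|\le\nu$" of Definition \ref{def-condA}.) Hence no such $\mathbf{p}(t)$ exists; since moreover $V(f_t)$ meets $S_r$ transversely for small $t$ (by compactness, from the corresponding property of $f_0$), the fibre $V(f_t-\eta_0)\cap B_r$ is a bouquet of $\mu_{f_t}(\mathbf{0})$ spheres $S^{n-1}$, and comparing Euler characteristics gives $\mu_{f_t}(\mathbf{0})=\mu_{f_0}(\mathbf{0})$.

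The main obstacle is the value-zero case $c(t)\equiv 0$ above. It cannot be handled radius by radius, because $|\mathbf{p}(t)|\to 0$ eventually falls below any fixed inner radius $\varepsilon'$; one must exploit that in Definition \ref{def-WUSR} a single $\delta(\varepsilon,\varepsilon')$ governs transversality over the entire annulus $\varepsilon'\le\varepsilon''\le\varepsilon$ and uniformly in $t$, together with a L\^e--Teissier-type analysis of the polar set (the critical points of $\mathbf{z}\mapsto|\mathbf{z}|^2$ restricted to $V(f_t-\eta)$) near $\mathbf{p}(t)$. An equivalent, and perhaps cleaner, route --- essentially O'Shea's analysis of vanishing folds and non-trivial critical arcs --- is to show first that a non-trivial critical arc of value identically $0$ is always accompanied by one of non-zero but vanishing critical value, which is disposed of as in the case $c(t)\neq 0$; then no critical arc of either kind survives, and the fibration argument of the second paragraph completes the proof.
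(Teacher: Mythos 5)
First, note that the paper does not prove Theorem \ref{thm-osokaumr}: it is quoted as a known result of Oka \cite{O2} and O'Shea \cite{OS}, so there is no in-paper argument to compare yours with. Judged on its own terms, your proposal has a genuine gap, and you have in fact located it yourself: the case of a critical arc $\mathbf{p}(t)\to\mathbf{0}$ of $f_t$ with critical value $c(t)\equiv 0$. Definition \ref{def-WUSR} constrains only the fibres $V(f_t-\eta)$ with $\eta\neq 0$ and says nothing about $V(f_t)$ itself, so such an arc is not excluded by any direct appeal to the definition. Your argument for this case --- that the vanishing cycles at $\mathbf{p}(t)$ ``force'' $V(f_t-\eta)$ to be tangent to some sphere $S_\rho$ with $\rho$ near $\vert\mathbf{p}(t)\vert$ --- is an unproved assertion: the observation that the local Milnor fibre at $\mathbf{p}(t)$ straddles $S_{\vert\mathbf{p}(t)\vert}$ does not by itself produce an interior critical point of the distance function (the extrema could sit on the boundary of the small ball around $\mathbf{p}(t)$). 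Making this precise is essentially the content of O'Shea's vanishing-fold/critical-arc analysis, i.e., it is the heart of the theorem rather than a lemma one may wave at, and your closing paragraph concedes that neither of the two routes you sketch there is actually carried out. A second, smaller gap: even granting that $\mathbf{0}$ is the only critical point of $f_t$ in $\mathring{B}_r$, transversality of $V(f_t)$ with the single sphere $S_r$ does not make $B_r$ a Milnor ball for $f_t$, so the identification of $V(f_t-\eta_0)\cap B_r$ with the Milnor fibre of $f_t$ at $\mathbf{0}$ is still unjustified as written.

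Both gaps can be closed by using the annulus clause of Definition \ref{def-WUSR}, which your proof never invokes. Fix a ($t$-dependent) Milnor radius $\epsilon_t\le r$ for $f_t$ at $\mathbf{0}$. Applying the definition to the pair $(\varepsilon,\varepsilon')=(r,\epsilon_t)$, for $0<\vert\eta\vert\le\delta(r,\epsilon_t)$ the fibre $V(f_t-\eta)$ is non-singular in $\mathring{B}_r$ and transverse to every sphere $S_{\varepsilon''}$ with $\epsilon_t\le\varepsilon''\le r$; hence the squared distance function has no critical points on $V(f_t-\eta)$ in the closed annulus, and $V(f_t-\eta)\cap B_r$ deformation retracts onto $V(f_t-\eta)\cap B_{\epsilon_t}$, which for $\vert\eta\vert$ also small relative to $\epsilon_t$ is the Milnor fibre of $f_t$ at $\mathbf{0}$. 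The mismatch between this $t$-dependent $\eta$ and your fixed $\eta_0$ is repaired by the $(\varepsilon,\varepsilon')=(r,r)$ clause: the restriction of $f_t$ to $B_r\cap f_t^{-1}(\mathring{D}_{\delta(r,r)}\setminus\{0\})$ is a proper submersion (also along $S_r$), hence a locally trivial fibration over the connected punctured disc, so $V(f_t-\eta_0)\cap B_r$ is diffeomorphic to $V(f_t-\eta)\cap B_r$. Combined with your Ehresmann fibration over $D_\tau$, this yields $\chi(F_t)=\chi(F_0)$ and hence $\mu$-constancy, without ever having to rule out critical arcs of value zero. This homotopy-theoretic identification, rather than the set-theoretic one you attempt, is essentially Oka's argument.
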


Actually, in \cite{O2}, M. Oka showed that if $\{f_t\}$ satisfies condition 
\emph{(A)} or if it has a uniform stable radius, then the Milnor fibrations at 
$\mathbf{0}$ of $f_0$ and $f_t$ are isomorphic. 

In \cite{LR}, L\^e D\~ung Tr\'ang and C. P. Ramanujam showed that for $n\not=3$ any family of isolated hypersurface singularities with constant Milnor number is topologically $\mathcal{V}$-equisingular. With the same assumption, J. G. Timourian \cite{T} showed that the family is actually topologically trivial.
We recall that a family $\{f_t\}$ is \emph{topologically $\mathcal{V}$-equisingular} (respectively, \emph{topologically trivial}) if there exist open neighbourhoods $D$ and $U$ of the origins in $\mathbb{C}$ and $\mathbb{C}^n$, respectively, together with a continuous map $\varphi\colon (D\times U, D\times \{\mathbf{0}\})
\rightarrow (\mathbb{C}^n,\mathbf{0})$
such that for all sufficiently small $t$, there is an open neighbourhood $U_t\subseteq U$ of $\mathbf{0}\in\mathbb{C}^n$ such that the map 
\begin{equation*}
\varphi_t\colon (U_t,\mathbf{0})\rightarrow (\varphi(\{t\}\times U_t),\mathbf{0}),
\ \mathbf{z}\mapsto\varphi_t(\mathbf{z}):=\varphi(t,\mathbf{z}),
\end{equation*}  
is a homeomorphism satisfying the relation 
\begin{equation*}
\varphi_t(V(f_0)\cap U_t)=V(f_t)\cap \varphi_t(U_t)
\end{equation*}  
(respectively, the relation $f_0=f_t\circ\varphi_t$ on $U_t$).

Note that, in general, ``$\mu$-constant'' does not imply condition \emph{(A)} (cf.~\cite{B,O4}). 

Finally, observe that the Brian\c con-Speder famous family shows that condition~\emph{(A)} does not imply the Whitney conditions along the $t$-axis (cf.~\cite{BS}).

\section{Uniformly stable families of line singularities}

\subsection{Setup and statement of the main result}
From now on we suppose that the polynomial function $f$ defines a family $\{f_t\}$ of hypersurfaces with \emph{line} singularities. As in \cite[\S4]{M7}, by such a family we mean a family $\{f_t\}$ such that for each $t$ small enough, the singular locus $\Sigma f_t$ of $f_t$ near the origin $\mathbf{0}\in\mathbb{C}^n$ is given by the $z_1$-axis, and the restriction of $f_t$ to the hyperplane $V(z_1)$ defined by $z_1=0$ has an isolated singularity at the origin. Then, by \cite[Remark 1.29]{M}, the partition of $V(f_t)$ given by
\begin{equation*}
\mathcal{S}_t:=\bigl\{V(f_t)\setminus\Sigma f_t,\Sigma f_t\setminus\{\mathbf{0}\}, \{\mathbf{0}\}\bigr\}
\end{equation*}
is a \emph{good stratification} for $f_t$ at $\mathbf{0}$, and the hyperplane $V(z_1)$ is a \emph{prepolar slice} for $f_t$ at $\mathbf{0}$ with respect to $\mathcal{S}_t$ for all $t$ small enough.
In particular, combined with \cite[Proposition~1.23]{M}, this implies that the \emph{L\^e numbers}
\begin{equation*}
\lambda^0_{f_t,\mathbf{z}}(\mathbf{0})
\quad\mbox{and}\quad
\lambda^1_{f_t,\mathbf{z}}(\mathbf{0})
\end{equation*}
of $f_t$ at~$\mathbf{0}$ with respect to the coordinates $\mathbf{z}$ do exist. (For the definitions of good stratifications, prepolarity and L\^e numbers, we refer the reader to D.~Massey's book~\cite{M}.) Note that for line singularities, the only possible non-zero L\^e numbers are precisely $\lambda^0_{f_t,\mathbf{z}}(\mathbf{0})$ and $\lambda^1_{f_t,\mathbf{z}}(\mathbf{0})$. All the other L\^e numbers $\lambda^k_{f_t,\mathbf{z}}(\mathbf{0})$ for $2\leq k\leq n-1$ are defined and equal to zero (cf.~\cite{M}). 

Here is our main observation.

\begin{theorem}\label{mt1}
Suppose that $f$ defines a family $\{f_t\}$ of hypersurfaces with line singularities.
Under this assumption, if furthermore $\{f_t\}$ has a uniform stable radius, then it is $\lambda_{\mathbf{z}}$-constant --- that is, the L\^e numbers $\lambda^{0}_{f_t,\mathbf{z}}(\mathbf{0})$ and $\lambda^{1}_{f_t,\mathbf{z}}(\mathbf{0})$ are independent of $t$ for all small $t$.
\end{theorem}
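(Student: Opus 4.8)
The plan is to reduce the statement about L\^e numbers to a statement about the isolated-singularity case already handled by the Oka--O'Shea theorem (Theorem~\ref{thm-osokaumr}), using the prepolar slice $V(z_1)$. Recall that, by Massey's theory, for a line singularity the two L\^e numbers have concrete geometric meaning: $\lambda^1_{f_t,\mathbf{z}}(\mathbf{0})$ is the multiplicity along the $z_1$-axis of the scheme $\Gamma^1_{f_t,\mathbf{z}}$ (or equivalently, it is the generic transverse Milnor number $\mu^\perp_t$ of $f_t$ along $\Sigma f_t = z_1$-axis, computed on a generic plane slice), while $\lambda^0_{f_t,\mathbf{z}}(\mathbf{0})$ is the intersection number of $\Gamma^1_{f_t,\mathbf{z}}$ with $V(f_t)$ at the origin, which by Massey's L\^e--Iomdine--Vogel machinery equals the Milnor number $\mu_{\mathbf{0}}\big(f_t|_{V(z_1)}\big)$ of the prepolar slice minus nothing — more precisely, for a prepolar slice one has $\lambda^0_{f_t,\mathbf{z}}(\mathbf{0}) = \mu_{\mathbf{0}}\big((f_t)|_{V(z_1)}\big)$ when the slice is generic, and in general $\lambda^0$ is controlled by the restriction to the slice together with $\lambda^1$. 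So the first step is to record, from \cite{M}, the precise identities expressing $\lambda^0_{f_t,\mathbf{z}}(\mathbf{0})$ and $\lambda^1_{f_t,\mathbf{z}}(\mathbf{0})$ in terms of (i) the Milnor number of the isolated singularity $g_t := f_t|_{V(z_1)}$ at $\mathbf{0}\in\mathbb{C}^{n-1}$, and (ii) the transverse Milnor number of $f_t$ along the $z_1$-axis at a generic nearby point.

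The second step is to show that a uniform stable radius $r$ for $\{f_t\}$ forces each of these two auxiliary quantities to be constant in $t$. For the transverse Milnor number: pick a point $\mathbf{p}=(a,0,\dots,0)$ on the $z_1$-axis with $0<|a|$ small but fixed, independent of $t$ (this is where the \emph{uniformity} of $r$ is essential — the same $\mathbf{p}$ must lie in $\mathring{B}_r$ and be a smooth-transverse-type point for all $|t|\le\tau$). Near $\mathbf{p}$, along a generic $2$-plane slice, $f_t$ has an isolated singularity, and the family of these slices is again a family of isolated hypersurface singularities in $\mathbb{C}^2$; the uniform stable radius of $\{f_t\}$ restricts to give a uniform stable radius (or condition (A)-type statement) for this sliced family near $\mathbf{p}$, so Theorem~\ref{thm-osokaumr} applies and $\mu^\perp_t$ is constant. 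For the slice $g_t = f_t|_{V(z_1)}$: the hyperplane $V(z_1)$ is prepolar for all small $t$, and one checks that a uniform stable radius for $\{f_t\}$ in $\mathbb{C}^n$ descends to a uniform stable radius for $\{g_t\}$ in $\mathbb{C}^{n-1}$ — the transversality of $V(f_t-\eta)$ to spheres $S_{\varepsilon''}\subset\mathbb{C}^n$, intersected with $V(z_1)$, yields transversality of $V(g_t-\eta)$ to the spheres $S_{\varepsilon''}\cap V(z_1)\subset\mathbb{C}^{n-1}$, and non-singularity is inherited because $V(z_1)$ is prepolar (the polar curve meets $V(z_1)$ only at $\mathbf{0}$). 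Then Theorem~\ref{thm-osokaumr} gives that $\mu_{\mathbf{0}}(g_t)$ is constant in $t$.

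The third and final step is to combine the two constancy results via the identities from step one to conclude that both $\lambda^0_{f_t,\mathbf{z}}(\mathbf{0})$ and $\lambda^1_{f_t,\mathbf{z}}(\mathbf{0})$ are constant. The main obstacle I expect is the second part of step two: verifying carefully that the \emph{uniform} stable radius of the ambient family genuinely restricts to a uniform stable radius for the prepolar slice family $\{g_t\}$. The subtlety is that Milnor's transversality and non-singularity conditions are stated for the ambient spheres and balls, and one must ensure (a) that the radius $r$ works simultaneously as a \emph{single} ball in which all the sliced nearby fibres are smooth, uniformly in $t$, and (b) that the relevant $\delta(\varepsilon,\varepsilon')$ can be chosen uniformly after slicing; this requires using the good-stratification/prepolarity hypotheses on $V(z_1)$ (valid for all small $t$ by \cite[Remark 1.29]{M}) to control how the ambient geometry restricts, and possibly a compactness argument over the parameter interval $|t|\le\tau$ to pass from the pointwise statements near $\mathbf{p}$ (for the transverse Milnor number) to a genuinely uniform one. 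Similarly one must make sure the generic $2$-plane direction used to compute $\mu^\perp_t$ can be chosen independently of $t$, which again follows from the uniformity built into Definition~\ref{def-WUSR} together with genericity being an open dense condition.
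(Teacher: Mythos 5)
Your overall strategy---reduce both L\^e numbers to Milnor numbers of isolated singularities (the prepolar slice $g_t=f_t|_{V(z_1)}$ and the transverse slice) and then invoke Oka--O'Shea---is different from the paper's and, as written, has two genuine gaps precisely at the steps you yourself flag as delicate. First, the descent of the uniform stable radius to the slice family $\{g_t\}$ does not work the way you sketch: transversality of $V(f_t-\eta)$ to the sphere $S_{\varepsilon''}\subset\mathbb{C}^n$ does \emph{not} imply transversality of $V(f_t-\eta)\cap V(z_1)$ to the equatorial sphere $S_{\varepsilon''}\cap V(z_1)$ (a manifold can meet a sphere transversally while its hyperplane section is tangent to the equator), and the non-singularity of $V(g_t-\eta)$ in a ball of radius independent of $t$ is not inherited either: the critical points of $g_t$ away from $\mathbf{0}$ come from the relative polar curve $\Gamma^1_{f_t,\mathbf{z}}$ meeting $V(z_1)$, and prepolarity only controls this at the origin for each fixed $t$; nothing in Definition~\ref{def-WUSR} prevents such critical points from approaching $\mathbf{0}$ as $t$ varies. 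Second, for the transverse Milnor number, the uniform stable radius is by definition a statement about balls and spheres centred at $\mathbf{0}$ and about the fibres $V(f_t-\eta)$ entering the Milnor fibration \emph{at the origin}; it does not restrict to condition (A) or to a uniform stable radius for the family of $2$-plane slices at a fixed point $\mathbf{p}\neq\mathbf{0}$ of the $z_1$-axis (the relevant spheres there are centred at $\mathbf{p}$, and the relevant "nearby fibres" are slices near $\mathbf{p}$), and a single $\mathbf{p}$ chosen independently of $t$ may fail to be generic for some $t$, since the exceptional points of the transverse type can move with $t$ and accumulate at $\mathbf{0}$. Also, your identity $\lambda^0_{f_t,\mathbf{z}}(\mathbf{0})=\mu_{\mathbf{0}}(g_t)$ is not correct as stated (for a prepolar slice one has $\mu_{\mathbf{0}}(g_t)=\lambda^0_{f_t,\mathbf{z}}(\mathbf{0})+\lambda^1_{f_t,\mathbf{z}}(\mathbf{0})$; e.g.\ $f=z_2^2$ gives $\mu=1$, $\lambda^0=0$, $\lambda^1=1$), though your hedged version would still let step three go through if step two were repaired.

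For contrast, the paper avoids both difficulties by working with the map $\Phi(t,\mathbf{z})=(f(t,\mathbf{z}),t)$: the uniform stable radius makes $\Phi$ a proper submersion on the relevant set, and a criterion of Massey--Siersma then gives directly the constancy of the \emph{generic} transverse Milnor number, i.e.\ of $\lambda^1_{f_t,\mathbf{z}}(\mathbf{0})$, with no need to fix a point $\mathbf{p}$ or to slice by $V(z_1)$. The constancy of $\lambda^0$ is then obtained not from the slice but from Oka's theorem that a uniform stable radius forces the diffeomorphism type of the Milnor fibration at $\mathbf{0}$ to be constant, combined with Massey's formula $\tilde\chi(F_{f_t,\mathbf{0}})=(-1)^{n-1}\lambda^0_{f_t,\mathbf{z}}(\mathbf{0})+(-1)^{n-2}\lambda^1_{f_t,\mathbf{z}}(\mathbf{0})$. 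If you want to salvage your route, the honest work is exactly in proving the two uniform restriction statements you postponed, and that work is essentially what the Massey--Siersma and Oka results package for you.
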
 

Theorem \ref{mt1} extends to line singularities Oka-O'Shea's Theorem \ref{thm-osokaumr} concerning isolated singularities. Indeed, for isolated singularities, the only possible non-zero L\^e number is $\lambda^{0}_{f_t,\mathbf{z}}(\mathbf{0})$ and the latter coincides with the Milnor number $\mu_{f_t}(\mathbf{0})$.

Note that if $\{f_t\}$ is a $\lambda_{\mathbf{z}}$-constant family of line singularities in $\mathbb{C}^n$ with $n\geq 5$, then, by a theorem of D. Massey \cite[Theorem (5.2)]{M7}, the diffeomorphism type of the Milnor fibration of $f_t$ at $\mathbf{0}$ is independent of $t$ for all small $t$. Under the same assumption,  in \cite[Theorem 42]{F}, J. Fern\'andez de Bobadilla showed that $\{f_t\}$ is  actually topologically trivial. Combining Fern\'andez de Bobadilla's result with our Theorem \ref{mt1} gives the following corollary.

\begin{corollary}\label{mt2}
Suppose that $f$ defines a family $\{f_t\}$ of hypersurfaces with line singularities in $\mathbb{C}^n$ with $n\geq 5$.
Under this assumption, if furthermore $\{f_t\}$ has a uniform stable radius, then it is topologically trivial.
\end{corollary}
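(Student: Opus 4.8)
The plan is to obtain the corollary by simply chaining Theorem~\ref{mt1} with Fern\'andez de Bobadilla's topological triviality theorem; no new geometry is required. Since, by hypothesis, $\{f_t\}$ is a family of hypersurfaces with line singularities possessing a uniform stable radius, Theorem~\ref{mt1} applies at once and shows that $\{f_t\}$ is $\lambda_{\mathbf{z}}$-constant: the L\^e numbers $\lambda^0_{f_t,\mathbf{z}}(\mathbf{0})$ and $\lambda^1_{f_t,\mathbf{z}}(\mathbf{0})$ are independent of $t$ for all small $t$, while the remaining L\^e numbers $\lambda^k_{f_t,\mathbf{z}}(\mathbf{0})$ with $2\leq k\leq n-1$ vanish identically and so are trivially constant too.

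It then remains to feed this into \cite[Theorem~42]{F}, according to which a family of line singularities in $\mathbb{C}^n$ with $n\geq 5$ and constant L\^e numbers is topologically trivial. The one point to check before quoting it is that the standing hypotheses maintained throughout this section --- for every small $t$ the singular locus $\Sigma f_t$ near $\mathbf{0}$ is the $z_1$-axis, $f_t|_{V(z_1)}$ has an isolated singularity at $\mathbf{0}$, and consequently $\mathcal{S}_t$ is a good stratification and $V(z_1)$ a prepolar slice for $f_t$ at $\mathbf{0}$ --- are exactly the setting in which \cite{F} operates, so that the fixed coordinates $\mathbf{z}$ are admissible in Massey's sense for computing the L\^e numbers (should \cite{F} phrase the hypothesis via a generic linear form rather than the fixed $z_1$, one aligns the two using the genericity of $z_1$ built into the line-singularity setup). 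Granting this, \cite[Theorem~42]{F} furnishes neighbourhoods $D$ of $0$ in $\mathbb{C}$ and $U$ of $\mathbf{0}$ in $\mathbb{C}^n$ together with a continuous map $\varphi$ realising, for all small $t$, homeomorphisms $\varphi_t$ with $f_0 = f_t\circ\varphi_t$ near $\mathbf{0}$ --- precisely topological triviality.

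The only genuine (and essentially bookkeeping) obstacle I foresee is this reconciliation of hypotheses, together with the routine remark that the two ranges of ``small $t$'' coming from Theorem~\ref{mt1} and from \cite[Theorem~42]{F} may be intersected into a single neighbourhood of $0$ in the $t$-line; apart from that, the corollary is a purely formal consequence of the two cited results.
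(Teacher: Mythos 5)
Your argument is exactly the paper's: Theorem~\ref{mt1} gives $\lambda_{\mathbf{z}}$-constancy from the uniform stable radius, and then \cite[Theorem 42]{F} upgrades this to topological triviality for $n\geq 5$; the paper states the corollary as precisely this combination. Your extra remarks on reconciling hypotheses and intersecting the ranges of small $t$ are harmless bookkeeping and do not change the route.
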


\subsection{Application to families of non-degenerate line singularities with constant Newton boundary}

In \cite[Corollary 1]{O}, M. Oka showed that if $\{f_t\}$ is a family of hypersurface singularities --- not necessary line singularities --- such that for all small $t$ the polynomial function $f_t$ is \emph{non-degenerate} and the \emph{Newton boundary} of $f_t$ at $\mathbf{0}$ with respect to the coordinates $\mathbf{z}$ is independent of $t$, then $\{f_t\}$ has a uniform stable radius. (For the definitions of non-degeneracy and Newton boundary, see \cite{K,O1}.) Combined with Oka's result, Corollary \ref{mt2} provides a new proof of the following theorem of J. Damon \cite{D}. (Actually, the theorem of Damon given in \cite{D} is much more general than the special case stated below.)

\begin{theorem}[Damon]
Suppose that $f$ defines a family $\{f_t\}$ of hypersurfaces with line singularities in $\mathbb{C}^n$ with $n\geq 5$. Under this assumption, if furthermore for any sufficiently small $t$ the polynomial function $f_t$ is non-degenerate and the Newton boundary of $f_t$ at $\mathbf{0}$ with respect to the coordinates $\mathbf{z}$ is independent of $t$, then the family $\{f_t\}$ is topologically trivial.
\end{theorem}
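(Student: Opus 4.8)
The plan is to obtain this statement by simply concatenating two results already available: M.~Oka's theorem that non-degenerate families with constant Newton boundary have a uniform stable radius, and Corollary~\ref{mt2} above. Neither step requires a new argument, so the proof is short; in effect, it is the observation that the special case of Damon's theorem stated here factors through the notion of uniform stable radius.

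First I would invoke \cite[Corollary~1]{O}. By hypothesis, for every sufficiently small $t$ the polynomial function $f_t$ is non-degenerate and its Newton boundary at $\mathbf{0}$ with respect to the coordinates $\mathbf{z}$ does not depend on $t$. Since Oka's result applies to arbitrary families of hypersurface singularities --- and, in particular, to families with line singularities --- it follows at once that $\{f_t\}$ has a uniform stable radius in the sense of Definition~\ref{def-WUSR}. The only point to be careful about here is that the notion of (uniform) stable radius used in \cite[Corollary~1]{O} is the one recalled in Section~\ref{Sect-USR}; this is indeed so, since both are taken from \cite{O}.

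Then I would apply Corollary~\ref{mt2}. By assumption $\{f_t\}$ is a family of hypersurfaces with line singularities in $\mathbb{C}^n$ with $n\geq 5$, and we have just shown that it has a uniform stable radius; hence Corollary~\ref{mt2} yields that $\{f_t\}$ is topologically trivial, which is the desired conclusion.

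There is no genuine obstacle in this argument: all the substantial work has been carried out elsewhere --- on the one hand in \cite{O}, which extracts the uniform stable radius from the Newton data, and on the other hand in Theorem~\ref{mt1} together with Fern\'andez de Bobadilla's theorem \cite[Theorem~42]{F}, which pass from a uniform stable radius to $\lambda_{\mathbf{z}}$-constancy and then to topological triviality. What is worth stressing is only that the implications ``constant Newton boundary and non-degeneracy $\Rightarrow$ uniform stable radius $\Rightarrow$ $\lambda_{\mathbf{z}}$-constant $\Rightarrow$ topologically trivial'' reprove, for $n\geq 5$ and by means entirely different from those of \cite{D}, the special case of Damon's theorem stated above.
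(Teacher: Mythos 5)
Your proposal is exactly the argument the paper intends: it states that Corollary~\ref{mt2}, combined with Oka's result \cite[Corollary 1]{O} that non-degenerate families with constant Newton boundary have a uniform stable radius, yields this special case of Damon's theorem. The proposal is correct and takes the same route.
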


\subsection{Proof of Theorem \ref{mt1}}
Consider the map $\Phi \colon \mathbb{C}\times\mathbb{C}^n\to \mathbb{C}^2$ defined by 
\begin{equation*}
(t,\mathbf{z})\mapsto \Phi(t,\mathbf{z}):=(f(t,\mathbf{z}),t),
\end{equation*}
and pick positive numbers $\tau$ and $r$ which satisfy the condition of Definition \ref{def-WUSR}. Then, in particular, the following property ($\mathcal{P}$) holds true:
\begin{enumerate}
\item [($\mathcal{P}$)]
for any $\varepsilon$ with $0<\varepsilon< r$, there exists $\delta(\varepsilon)>0$ such that for any $t$ with $0\leq\vert t\vert\leq\tau$ and for any $\eta$ with $0<\vert \eta\vert \leq\delta(\varepsilon)$, the hypersurface $V(f_t-\eta)$ is non-singular in $\mathring{B}_r$ and transversely intersects with the sphere $S_{\varepsilon}$.
\end{enumerate}
This property implies that the critical set $\Sigma \Phi$ of $\Phi$ does not intersect with the set
\begin{equation*}
U(\mathring{B}_r):=(\mathring{D}_{\tau}\times \mathring{B}_r) \cap
\Phi^{-1}((\mathring{D}_{\delta(\varepsilon)}\setminus \{\mathbf{0}\})\times\mathring{D}_{\tau}).
\end{equation*}
Indeed, suppose there is a point $(t_0,\mathbf{z}_0)\in\Sigma \Phi\cap 
U(\mathring{B}_r)$. Then $\mathbf{z}_0\in\Sigma (f_{t_0}-f_{t_0}(\mathbf{z}_0))$. 
But this is not possible, since by ($\mathcal{P}$), the hypersurface $V(f_{t_0}-f_{t_0}(\mathbf{z}_0))$ is smooth. (We recall that a complex variety can never be a smooth manifold throughout a neighbourhood of a critical point (cf.~\cite[\S 2]{Milnor2}).)

It also follows from Property ($\mathcal{P}$) that the map
\begin{equation*}
\Phi_{\mid U(S_\varepsilon)} \colon
U(S_\varepsilon)\rightarrow
(\mathring{D}_{\delta(\varepsilon)}\setminus \{\mathbf{0}\})\times\mathring{D}_{\tau}
\end{equation*}
(restriction of $\Phi$ to $U(S_\varepsilon):=(\mathring{D}_{\tau}\times S_\varepsilon) \cap\Phi^{-1}((\mathring{D}_{\delta(\varepsilon)}\setminus \{\mathbf{0}\})\times\mathring{D}_{\tau})$) is a submersion.
Indeed, as $\Sigma \Phi\cap U(\mathring{B}_r)=\emptyset$ and $U(\mathring{B}_r)$ is an open subset of $\mathbb{C}\times\mathbb{C}^n$, the map 
\begin{equation*}
\Phi_{\mid U(\mathring{B}_r)} \colon
U(\mathring{B}_r)\rightarrow (\mathring{D}_{\delta(\varepsilon)}
\setminus \{\mathbf{0}\})\times\mathring{D}_{\tau}
\end{equation*}
is a submersion. Thus, to show that $\Phi_{\mid U(S_\varepsilon)}$ is a  submersion, it suffices to observe that the inclusion $U(S_\varepsilon)\hookrightarrow U(\mathring{B}_r)$ is transverse to the submanifold 
$\Phi_{\mid U(\mathring{B}_r)}^{-1}(f(t,\mathbf{z}),t)$ for any point $(t,\mathbf{z})\in U(S_\varepsilon)$ --- or equivalently that the submanifolds 
\begin{equation*}
\Phi_{\mid U(\mathring{B}_r)}^{-1}(f(t,\mathbf{z}),t)
\quad\mbox{and}\quad
(\{t\}\times S_\varepsilon)\cap U(\mathring{B}_r)
\end{equation*}
are transverse to each other. And this is exactly the content of~($\mathcal{P}$).

Now, as $\Phi_{\mid U(S_\varepsilon)}$ is also a proper map, a result of D. Massey and D. Siersma (cf.~\cite[Proposition 1.10]{MS}) shows that the Milnor number of a generic hyperplane slice of $f_t$ at a point on $\Sigma f_t$ sufficiently close to the origin --- which coincides with the L\^e number $\lambda^{1}_{f_t,\mathbf{z}}(\mathbf{0})$ for line singularities (cf.~\cite{Le,M7}) --- is independent of $t$ for all small $t$. 

Finally, since the family $\{f_t\}$ has a uniform stable radius --- the full strength of this assumption is used here --- it follows from a result of M. Oka (cf.~\cite[Lemma 2]{O}) that the diffeomorphism type of the Milnor fibration of $f_t$ at the origin is independent of $t$ for all small $t$. 
In particular,  the reduced Euler characteristic $\tilde \chi(F_{f_t,\mathbf{0}})$ of the Milnor fibre $F_{f_t,\mathbf{0}}$ of $f_t$ at~$\mathbf{0}$, which is given by
\begin{equation*}
\tilde \chi(F_{f_t,\mathbf{0}})=(-1)^{n-1}\lambda^{0}_{f_t,\mathbf{z}}(\mathbf{0}) + (-1)^{n-2} \lambda^{1}_{f_t,\mathbf{z}}(\mathbf{0})
\end{equation*}
(cf.~\cite[Theorem 3.3]{M}), is independent of $t$ for all small $t$. 
The constancy of $\lambda^{0}_{f_t,\mathbf{z}}(\mathbf{0})$ now follows from that of $\lambda^{1}_{f_t,\mathbf{z}}(\mathbf{0})$.

\section{Uniform stable radius and weighted homogeneous\\ line singularities}

By a result of M. Oka \cite{O2} and D. O'Shea \cite{OS}, we know that if $\{f_t\}$ is a family of \emph{isolated} hypersurface singularities such that each $f_t$ is \emph{weighted homogeneous} with respect to a given system of weights, then $\{f_t\}$ satisfies condition \emph{(A)}, and hence, is uniformly stable.
Our next observation says this still holds true for weighted homogeneous \emph{line} singularities provided that the nearby fibres $V(f_t-\eta)$, $\eta\not=0$, of the functions $f_t$ are ``uniformly'' non-singular with respect to the deformation parameter $t$ --- that is, non-singular in a small ball the radius of which does \emph{not} depends on $t$. (We recall that by \cite{HL} the nearby fibres are ``individually'' non-singular --- that is, non-singular in a small ball the radius of which depends on~$t$.)

\begin{theorem}\label{mt3}
Suppose that $f$ defines a family $\{f_t\}$ of hypersurfaces with line singularities such that each $f_t$ is weighted homogeneous with respect to a given~system of weights $\mathbf{w}=(w_1,\ldots,w_n)$ on the variables $(z_1,\ldots,z_n)$, with $w_i\in\mathbb{N}\setminus\{0\}$. Also, assume that the nearby fibres $V(f_t-\eta)$, $\eta\not=0$, of the functions $f_t$ are uniformly non-singular with respect to the deformation parameter $t$ --- that is, there exist positive numbers $\tau,r,\delta$ such that for any $0<\vert\eta\vert\leq\delta$ and any $0\leq\vert t\vert\leq\tau$, the hypersurface $V(f_t-\eta)$ is non-singular in $\mathring{B}_r$. Under these assumptions, the family $\{f_t\}$ has a uniform stable radius. (In particular, $\{f_t\}$ is $\lambda_{\mathbf{z}}$-constant, and for $n\geq 5$, it is topologically trivial.)
\end{theorem}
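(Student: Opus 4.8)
The plan is to verify the conditions of Definition \ref{def-WUSR} directly, using the weighted $\mathbb{C}^{*}$-action and Euler's identity to convert a failure of transversality into a gradient bound, and then a uniform Łojasiewicz estimate to exclude it.

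First, observe that the non-singularity requirement in Definition \ref{def-WUSR} is exactly the standing hypothesis on the nearby fibres: with $\tau,r,\delta$ as in the statement, $V(f_t-\eta)$ is non-singular in $\mathring{B}_r$ for all $0\le\vert t\vert\le\tau$ and $0<\vert\eta\vert\le\delta$. So the whole issue is \emph{transversality} with spheres, uniformly in $t$: I must produce $\tau'\le\tau$, $r'<r$ and, for each pair $0<\varepsilon'\le\varepsilon\le r'$, a number $\delta'(\varepsilon,\varepsilon')\in(0,\delta]$ such that $V(f_t-\eta)$ meets $S_{\varepsilon''}$ transversely whenever $0<\vert\eta\vert\le\delta'(\varepsilon,\varepsilon')$, $\vert t\vert\le\tau'$ and $\varepsilon'\le\varepsilon''\le\varepsilon$. (Along the way one checks that the common weighted degree $d$ of the $f_t$'s is independent of $t$, since $t\mapsto f_t$ is continuous into the fixed finite-dimensional space of weighted homogeneous polynomials of a prescribed degree, and that $\Sigma f_t$, being a weighted cone which coincides with the $z_1$-axis near $\mathbf{0}$, in fact equals the $z_1$-axis globally; by Euler's identity it is contained in $V(f_t)$.)

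Fix the weighted Euler field $E:=\sum_j w_j z_j\,\partial_{z_j}$, so that $E(f_t)=d\cdot f_t$. For a point $\mathbf{z}\in V(f_t-\eta)\cap S_{\varepsilon''}$ (which lies in $\mathring{B}_r$, since $\varepsilon''\le r'<r$, hence is a smooth point of $V(f_t-\eta)$ by hypothesis), the intersection fails to be transverse at $\mathbf{z}$ precisely when $\nabla f_t(\mathbf{z})=\lambda\,\overline{\mathbf{z}}$ for some $\lambda\in\mathbb{C}$. Substituting this into Euler's identity $\sum_j w_j z_j\,\partial_{z_j}f_t(\mathbf{z})=d\,f_t(\mathbf{z})=d\eta$ gives $\lambda\sum_j w_j\vert z_j\vert^{2}=d\eta$, whence $\lambda\neq 0$ and
\begin{equation*}
\vert\nabla f_t(\mathbf{z})\vert=\vert\lambda\vert\,\vert\mathbf{z}\vert=\frac{d\,\vert\eta\vert\,\vert\mathbf{z}\vert}{\sum_j w_j\vert z_j\vert^{2}}\ \le\ \frac{d}{\min_j w_j}\cdot\frac{\vert\eta\vert}{\varepsilon''}.
\end{equation*}
On the other hand I claim a \emph{uniform} Łojasiewicz gradient inequality: there exist $r'<r$, $\tau'\le\tau$, $C>0$ and $\theta\in(0,1)$ with $\vert\nabla f_t(\mathbf{z})\vert\ge C\,\vert f_t(\mathbf{z})\vert^{\theta}$ for all $\vert\mathbf{z}\vert\le r'$ and $\vert t\vert\le\tau'$. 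Granting this, a non-transverse point as above would satisfy $C\vert\eta\vert^{\theta}\le (d/\min_j w_j)\,\vert\eta\vert/\varepsilon''$, i.e. $\vert\eta\vert^{\,1-\theta}\ge (C\min_j w_j/d)\,\varepsilon''\ge (C\min_j w_j/d)\,\varepsilon'$; so the choice
\begin{equation*}
\delta'(\varepsilon,\varepsilon'):=\min\Bigl\{\delta,\ \tfrac12\bigl((C\min_j w_j/d)\,\varepsilon'\bigr)^{1/(1-\theta)}\Bigr\}
\end{equation*}
leaves no non-transverse point, and $r'$ is a uniform stable radius for $\{f_t\}$.

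The main obstacle is the uniform Łojasiewicz inequality, and this is exactly where the hypotheses are used: weighted homogeneity transports such an inequality between scales (via the $\mathbb{C}^{*}$-action, using that $\nabla f_t$ vanishes only along the $z_1$-axis), while the uniform non-singularity of the nearby fibres — together with the fact that the singularity type does not degenerate in the family (constant weighted degree, $\Sigma f_t$ the $z_1$-axis, $f_t|_{V(z_1)}$ an isolated singularity) — supplies the uniformity in $t$. I would establish it either by a curve-selection argument on the parameter-dependent set where the ratio $\vert\nabla f_t(\mathbf{z})\vert/\vert f_t(\mathbf{z})\vert^{\theta}$ degenerates, or, in the spirit of the Oka–O'Shea argument in the isolated case, by first proving the gradient inequality for a single weighted homogeneous line singularity with constants controlled by the coefficients of $f_t$ (reducing, via the weighted $\mathbb{C}^{*}$-action, to the compact unit sphere $S_1$, on which the zero set of $\nabla f_t$ is the circle $S_1\cap(z_1\text{-axis})$), and then invoking compactness of $\{\vert t\vert\le\tau\}$ and the polynomial dependence of the coefficients on $t$. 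Once the uniform stable radius is established, the parenthetical assertions are immediate: $\lambda_{\mathbf{z}}$-constancy follows from Theorem \ref{mt1}, and, for $n\ge 5$, topological triviality from Corollary \ref{mt2}.
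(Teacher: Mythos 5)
Your reduction of the problem to transversality, and your computation at a putative non-transverse point (writing $\nabla f_t(\mathbf{z})=\lambda\bar{\mathbf{z}}$, feeding this into the Euler identity to get $\lambda\sum_j w_j\vert z_j\vert^2=d\eta$, hence $\lambda\neq 0$ and the bound $\vert\nabla f_t(\mathbf{z})\vert\leq (d/\min_j w_j)\vert\eta\vert/\varepsilon''$) are correct and use exactly the same mechanism as the paper. But the step your whole argument rests on --- the \emph{uniform} gradient \L{}ojasiewicz inequality $\vert\nabla f_t(\mathbf{z})\vert\geq C\vert f_t(\mathbf{z})\vert^{\theta}$ with $\theta<1$ and with $C$, $\theta$, $r'$, $\tau'$ independent of $t$ --- is asserted, not proved, and neither of your two sketches closes it. The exponent $\theta<1$ is essential: the classical \L{}ojasiewicz inequality for the pair of semialgebraic functions $\vert f\vert$, $\vert\nabla_{\mathbf{z}}f\vert$ on the compact set $D_\tau\times(B_\varepsilon\setminus\mathring{B}_{\varepsilon'})$ (which is what a routine curve-selection/compactness argument delivers) only gives $\vert\nabla f_t\vert\geq C\vert f_t\vert^{N}$ with $N\geq 1$, and with $\theta\geq 1$ your final inequality $\vert\eta\vert^{1-\theta}\geq C'\varepsilon'$ excludes nothing as $\eta\to 0$. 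Getting $\theta<1$ uniformly in a family is a genuinely delicate matter (exponents and constants can degenerate as $t$ varies). Your second sketch also has a concrete obstruction: the weighted $\mathbb{C}^{*}$-action does not preserve the round spheres $S_{\varepsilon''}$ unless all $w_j$ are equal, and the components $\partial_{z_j}f_t$ have different weighted degrees $d-w_j$, so $\vert\nabla f_t\vert$ does not scale homogeneously; moreover the zero locus of $\nabla f_t$ meets the unit sphere along the circle $S_1\cap(z_1\text{-axis})$, so compactness on $S_1$ gives no positive lower bound there.

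The paper avoids this quantitative lemma entirely. Arguing by contradiction, it takes a sequence $\delta(m)=1/m\to 0$ and corresponding non-transverse points $(t_{\delta(m)},\mathbf{z}_{\delta(m)})$ lying in the compact set $C\cap(D_\tau\times(B_\varepsilon\setminus\mathring{B}_{\varepsilon'}))$, where $C$ is the real algebraic set defined by $\nabla f_t(\mathbf{z})=\lambda\bar{\mathbf{z}}$, extracts a convergent subsequence, observes that the limit $(t_{\tau,r},\mathbf{z}_{\tau,r})$ satisfies $f(t_{\tau,r},\mathbf{z}_{\tau,r})=0$ because $\eta_{\delta(m)}\to 0$, and then applies the Euler identity \emph{at the limit point}: $0=d_{\mathbf{w}}f_{t_{\tau,r}}(\mathbf{z}_{\tau,r})=\lambda\sum_i w_i\vert(z_{\tau,r})_i\vert^2\neq 0$. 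In other words, the contradiction is extracted qualitatively at a single limit point on $V(f)$ rather than from a quantitative gradient estimate valid throughout a neighbourhood. If you want to salvage your route, you must actually prove the uniform gradient inequality with $\theta<1$; as it stands, the proposal has a gap at its central step.
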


\begin{proof}
It is based on similar arguments than those used in \cite{O2} and \cite{OS}.
Suppose that the family $\{f_t\}$ does not have a uniform stable radius. Then, as the nearby fibres of the functions $f_t$ are uniformly non-singular with respect to the deformation parameter $t$, for all $\tau>0$ and all $r>0$ small enough, there exist $0<\varepsilon'\leq\varepsilon\leq r$ such that for all sufficiently small $\delta>0$ there exist $\eta_\delta$, $\varepsilon_\delta$ and $t_\delta$, with $0<\vert\eta_\delta\vert\leq\delta$, $\varepsilon'\leq\varepsilon_\delta\leq\varepsilon$ and $\vert t_\delta\vert\leq\tau$, such that $V(f_{t_\delta}-\eta_\delta)$ is non-singular in $\mathring{B}_r$ and does not transversely intersect with the sphere $S_{\varepsilon_\delta}$. It follows that there is a point $\mathbf{z}_\delta\in V(f_{t_\delta}-\eta_\delta)\cap S_{\varepsilon_\delta}$ which is a critical point of the restriction to $V(f_{t_\delta}-\eta_\delta) \cap B_r$ of the squared distance function:
\begin{equation*}
\mathbf{z}\in V(f_{t_\delta}-\eta_\delta) \cap B_r \mapsto 
\Vert \mathbf{z}\Vert^2=\sum_{1\leq i\leq n} \vert z_i\vert^2.
\end{equation*}
In other words, the point $(t_\delta,\mathbf{z}_\delta)$ lies in the intersection of $D_\tau\times (B_\varepsilon\setminus \mathring{B}_{\varepsilon'})$ with the \emph{real} algebraic set $C$ consisting of the points $(t,\mathbf{z})$ such that
\begin{equation}\label{ld}
\bigg(\frac{\partial f_t}{\partial z_1}(\mathbf{z}),\ldots,\frac{\partial f_t}{\partial z_n}(\mathbf{z})\bigg)=\lambda\bar{\mathbf{z}} 
\end{equation}
for some $\lambda\in\mathbb{C}\setminus\{0\}$,
where $\bar{\mathbf{z}}:=(\bar z_1,\ldots,\bar z_n)$ and $\bar z_i$ denotes the complex conjugate of $z_i$ (see e.g. \cite[Lemma 1]{OS2}). 
Let $C_{\tau,r}:=C\cap(D_\tau\times (B_\varepsilon\setminus \mathring{B}_{\varepsilon'}))$.
Take $\delta:=\delta(m):=1/m$ (where $m\in\mathbb{N}\setminus\{0\}$ is sufficiently large), and consider the corresponding sequence of points $(t_{\delta(m)},\mathbf{z}_{\delta(m)})$ in $C_{\tau,r}$. As $C_{\tau,r}$ is compact, taking a subsequence if necessary, we may assume that $(t_{\delta(m)},\mathbf{z}_{\delta(m)})$ converges to a point $(t_{\tau,r},\mathbf{z}_{\tau,r})\in C_{\tau,r}$, and hence $\eta_{\delta(m)}:=f(t_{\delta(m)},\mathbf{z}_{\delta(m)})$ tends to $f(t_{\tau,r},\mathbf{z}_{\tau,r})$ as $m\to\infty$. Since $0<\vert\eta_{\delta(m)}\vert\leq\delta(m)=1/m\to 0$ as $m\to\infty$, we have $f(t_{\tau,r},\mathbf{z}_{\tau,r})=0$. Thus $(t_{\tau,r},\mathbf{z}_{\tau,r})\in V(f)\cap C_{\tau,r}$.

Now, since $f_{t_{\tau,r}}$ is weighted homogeneous with respect to the weights $\mathbf{w}=(w_1,\ldots,w_n)$, the \emph{Euler identity} implies the following contradiction:
\begin{equation*}
d_{\mathbf{w}}\cdot \underbrace{f_{t_{\tau,r}}(\mathbf{z}_{\tau,r})}_{=0} 
\overset{\mbox{\tiny Euler}}{=}
\sum_{1\leq i \leq n}w_i (z_{\tau,r})_i
\frac{\partial f_{t_{\tau,r}}}{\partial z_i}(\mathbf{z}_{\tau,r}) 
\overset{(\ref{ld})}{=}
\lambda\sum_{1\leq i \leq n}w_i \vert (z_{\tau,r})_i \vert^2 \not=0,
\end{equation*}
where $d_{\mathbf{w}}$ is the weighted degree of $f_{t_{\tau,r}}$ with respect to the weights $\mathbf{w}$ and $(z_{\tau,r})_i$ is the $i$th component of $\mathbf{z}_{\tau,r}$.
\end{proof}

\begin{remark}
Actually, the proof shows that if $f$ defines a family $\{f_t\}$ of hypersurfaces --- not necessarily with line singularities --- such that each $f_t$ is weighted homogeneous with respect to a given~system of weights $\mathbf{w}$, and if furthermore, the nearby fibres $V(f_t-\eta)$, $\eta\not=0$, of the functions $f_t$ are uniformly non-singular with respect to the deformation parameter $t$, then the family $\{f_t\}$ has a uniform stable radius.
\end{remark}

\end{document}